\numberwithin{equation}{section}
 \newtheorem{thm}{Theorem}[section]
\newtheorem{cor}[thm]{Corollary}
\numberwithin{equation}{section}
\begin{document}
 \centerline{\Large{\bf Enumeration for the total number of all}}
\centerline{}
\centerline{\Large{\bf spanning forests of complete tripartite}}
 \centerline{}
\centerline{\Large{\bf graph based on the combinatorial}}
 \centerline{}
\centerline{\Large{\bf decomposition}}
 \centerline{}
\centerline{Sung Sik  U}
 \centerline{}
 \small \centerline{Faculty of Mathematics, \textbf{Kim Il Sung} University, D.P.R Korea}
\small \centerline{e-mail address : usungsik@yahoo.com}
\centerline{}
\centerline{}
\begin{abstract}
This paper discusses the enumeration for the total
  number of all rooted spanning forests of the labeled  complete tripartite graph.
  We enumerate the total number by a combinatorial decomposition.
\end{abstract}
{\bf Keywords:} Tree, Forest, Join graph \\
{\bf MSC(2010):} 05C05; 05C17; 05C30
%
%
%
%
\section{Introduction}
Y. Jin and C. Liu have enumerated the number of spanning forests of the labeled complete
 bipartite $K_{m,n}$ on $m$ and $n$ vertices by combinatorial method and by using the 
exponential generating function respectively (\cite{Ji1},\cite{Ji2}). And D. Stark \cite{Sta} 
has found the asymptotic number of labeled spanning forests of the complete bipartite graph $K_{m,n}$
as $m \to \infty$ when $m \leq n$  and $n=o(m^{6/5})$. L. A. Szekely \cite{Sze} gave a simple 
proof to the formula in \cite{Ji1} and a generalization for complete multipartite graphs. 
In \cite{Ege}, \cite{Lew} a bijective proof of the enumeration of spanning trees of the complete 
tripartite graphs and the complete multipartite graphs has been given respectively.  

Let $H_{m}$,$H_{n}$,$H_{p}$ denote the three disjoined vertex sets
of the complete tripartite $K_{m,n,p}$, that is, $K_{m,n,p}=(H_{m}, H_{n}, H_{p})$.
Let $V(K_{m,n,p})=H_{m}\bigcup H_{n}\bigcup H_{p}$ denote the vertex set
of $K_{m,n,p}$. The out-degree of a vertex $z$ will be denoted by $d^{+}(z)$,
while the in-degree of $z$ will be denoted by $d^{-}(z)$. 
Let $V(G)$ denote the 
vertex set of graph $G$. The goal of this paper is to give a closed formula of the 
enumeration for the total number of all spanning trees, forests of
the labeled complete tripartite $K_{m,n,p}$ by the combinatorial method. Throughout 
this paper, we will consider only the labeled graphs.

%
%
%
%

\section{Counting the number of spanning trees and forests of a labeled complete tripartite graph $K_{m,n,p}$}
Let $T(m,n,p)$ denote the set of all labeled spanning trees of the complete tripartite graph $K_{m,n,p}$.
Each tree $T$ in $K_{m,n,p}$ gives rise to labeled directed spanning tree $T^{'}$ with $z$ as a root, and all edges are directed to towards $z$.
Let $D(m,0;n,0;p,|\{z_{1}\}|)$ denote the set of all such directed trees with $z_{1}\subset H_{p}$ as a root. Clearly,
  \begin{equation*}
  |T_{m,n,p}|=|D(m,0;n,0;p,|\{z_{1}\}|)|.
  \end{equation*}

 For any $T \in D(m,0;n,0;p,|\{z_{1}\}|)$,
 
 \begin{equation*}
  d^{+}(z_{1})=0, d^{+}(z)=1, z\in V(K_{m,n,p})\backslash \{z_{1}\}.
  \end{equation*}

  It is well known \cite{Ege} that the number $f(m,l;n,k)$ of labeled spanning forests of $K_{m,n}=(H_{m}, H_{n})$, where in the forest every tree is
   rooted, there are  $l$ roots in $H_{m}$, $k$ roots in $H_{n}$, and the tree in
   the forest are not ordered, is equal to
  \begin{equation*}
  f(m,l;n,k)={m\choose l}{n\choose k}n^{m-l-1}m^{n-k-1}(km+ln-lk).
  \end{equation*}

 Our  proof  is based on the following combinatorial decomposition.
  Given a rooted spanning tree of the complete tripartite graph $K_{m,n,p}$ where the root is in $H_{p}$, we remove the root
  vertex from the tree to obtain a spanning forest of  the another tripartite graph.
  The roots of trees in this forest are in $H_{m}$ or $H_{n}$.

  \begin{thm}
  The number $|T_{m,n,p}|$ of labeled spanning trees of the complete tripartite graph $K_{m,n,p}$ is as follows:
  \begin{equation*}
   |T_{m,n,p}|=(m+n)^{p-1}(m+p)^{n-1}(n+p)^{m-1}(m+n+p).
  \end{equation*}
  \end{thm}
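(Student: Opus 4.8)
The plan is to set up a size-preserving bijection that strips the entire part $H_p$ off a spanning tree and records how the remaining bipartite forest is reconnected through $H_p$. Given a spanning tree $T$ of $K_{m,n,p}$, first delete the $p$ vertices of $H_p$ together with their incident edges; what remains is a spanning forest $F$ of $K_{m,n}$, say with components $T_1,\dots,T_c$ of sizes $s_1,\dots,s_c$, where $\sum_i s_i=m+n$. Since each vertex of $H_p$ is adjacent only to $H_m\cup H_n$, no vertex of $H_p$ can be joined to two vertices of the same $T_i$ without creating a cycle; hence, contracting each $T_i$ to a single node turns the $H_p$-incident edges of $T$ into a \emph{simple} bipartite graph on the node set $\{T_1,\dots,T_c\}\cup H_p$. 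A quick edge count shows this graph is a spanning tree of $K_{c,p}$: the tree $T$ has $m+n+p-1$ edges, of which $\sum_i(s_i-1)=m+n-c$ lie inside $F$, leaving exactly $c+p-1$ edges meeting $H_p$. Conversely, a forest $F$, a spanning tree $\widehat T$ of $K_{c,p}$, and a choice, for each edge $(T_i,v)$ of $\widehat T$, of one of the $s_i$ vertices of $T_i$ to serve as the actual endpoint, reassemble a unique spanning tree of $K_{m,n,p}$. This is the combinatorial decomposition I would use.

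The second step is to count the reconnections for a fixed $F$. Weighting each edge incident to the contracted node $T_i$ by $s_i$ (the number of attachment choices) and each vertex of $H_p$ by $1$, the weighted count of spanning trees of $K_{c,p}$ is
\begin{equation*}
\Big(\prod_{i=1}^{c}s_i\Big)(m+n)^{p-1}p^{\,c-1},
\end{equation*}
the specialization $x_i=s_i,\;y_j=1$ of the identity $\sum_{\widehat T}\prod_i x_i^{\deg(T_i)}\prod_j y_j^{\deg(v_j)}=(\prod_i x_i)(\prod_j y_j)(\sum_i x_i)^{p-1}(\sum_j y_j)^{c-1}$ for spanning trees of a complete bipartite graph. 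Summing over $F$ and grouping the forests by their number of components $c$, the quantity $\sum_{F:\,c\ \mathrm{comps}}\prod_i s_i$ is exactly the number of rooted spanning forests of $K_{m,n}$ with $c$ roots (one per tree), which by the cited formula equals $\sum_{l+k=c}f(m,l;n,k)$. This yields
\begin{equation*}
|T_{m,n,p}|=(m+n)^{p-1}\sum_{l\ge 0}\sum_{k\ge 0}p^{\,l+k-1}f(m,l;n,k),
\end{equation*}
the $(l,k)=(0,0)$ term vanishing because $f(m,0;n,0)=0$.

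It then remains to evaluate the sum $\Sigma:=\sum_{l,k}p^{\,l+k-1}f(m,l;n,k)$, which I expect to be the computational heart and the main obstacle. I would write $p^{\,l+k-1}=p^{-1}p^{\,l+k}$, substitute the explicit $f$, factor out $p^{-1}n^{m-1}m^{n-1}$, and set $x=p/n,\;y=p/m$, reducing $\Sigma$ to $p^{-1}n^{m-1}m^{n-1}\Phi$ with $\Phi=\sum_{l,k}\binom{m}{l}\binom{n}{k}x^{l}y^{k}(km+ln-lk)$. Splitting $km+ln-lk$ into three sums and applying the elementary identities $\sum_l\binom{m}{l}x^l=(1+x)^m$ and $\sum_l\binom{m}{l}l\,x^l=mx(1+x)^{m-1}$ (with their $n$-analogues), the three pieces combine, after factoring $mn(1+x)^{m-1}(1+y)^{n-1}$, into the bracket $(1+x)y+x(1+y)-xy=x+y+xy$. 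Since $1+x=(n+p)/n$, $1+y=(m+p)/m$, and $mn(x+y+xy)=p(m+n+p)$, one gets $\Phi=(1+x)^{m-1}(1+y)^{n-1}p(m+n+p)$, whence $\Sigma=(n+p)^{m-1}(m+p)^{n-1}(m+n+p)$ after the factor $p^{-1}\cdot p$ cancels; multiplying by $(m+n)^{p-1}$ gives the claimed formula. The delicate points are the justification of the weighted bipartite tree count, the identification of $\sum_{F}\prod_i s_i$ with the rooted-forest totals $f$, and the treatment of the boundary indices $l=m$ or $k=n$—where factors such as $n^{m-l-1}$ carry a negative exponent but cancel—so that the manipulation is read as a polynomial identity.
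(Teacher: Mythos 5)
Your proof is correct, and it arrives at the paper's key intermediate identity $|T_{m,n,p}|=\sum_{l,k}f(m,l;n,k)\,(m+n)^{p-1}p^{l+k-1}$ by a genuinely different mechanism. The paper works with directed trees rooted at a fixed $z_1\in H_p$: starting from a rooted bipartite forest $F\in D(m,l;n,k)$ it joins each vertex of $H_p\setminus\{z_1\}$ to $V(F)$ (the factor $(m+n)^{p-1}$), then adds $t$ edges one at a time from roots of $F$ into $H_p\setminus\{z_1\}$, obtaining $\binom{l+k-1}{t}(p-1)^{t}$ configurations after dividing by the order of addition, then wires the leftover roots to $z_1$, and sums the binomials to get $p^{l+k-1}$. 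You instead delete all of $H_p$ at once, contract the components $T_1,\dots,T_c$ of the surviving bipartite forest, observe that the $H_p$-incident edges project to a spanning tree of $K_{c,p}$, and count all lifts in one stroke via the degree-weighted spanning-tree enumerator of the complete bipartite graph specialized at $x_i=s_i$, $y_j=1$; identifying $\sum_{F}\prod_i s_i$ with the rooted-forest counts then recovers the same double sum. Your route buys a cleaner, manifestly bijective decomposition with no rooted/directed bookkeeping and no sequential-addition argument whose order-independence must be justified, at the price of invoking a stronger known lemma (the weighted bipartite Cayley formula) where the paper needs only the binomial theorem; conversely, the paper's rooted formulation extends verbatim to the forest enumerations of its Theorems 2.2 and 3.1, which your contraction argument would reach only with extra bookkeeping. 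A further merit of your write-up is that you carry out the closing algebraic evaluation explicitly (the reduction of the bracket to $x+y+xy$ and the cancellation $mn(x+y+xy)=p(m+n+p)$), a step the paper asserts in a single line, and your caveat about the boundary indices $l=m$, $k=n$, where $n^{m-l-1}$ or $m^{n-k-1}$ carries a negative exponent that cancels against the factor $km+ln-lk$, applies equally to the paper's display.
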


\begin{proof}

  We observe that a directed subgraph of  $K_{m,n,p}$ belongs to $D(m,0;n,0;p,|\{z_{1}\}|)$ if and only if, in the subgraph,
  \begin{equation*}
   d^{+}(z_{1})=0, d^{+}(z)=1, z\in V(K_{m,n,p})\backslash \{z_{1}\}
  \end{equation*}
  and the subgraph is (weakly)connected.
  Let $D(m,l;n,k)$ denote the set of all spanning forests of
  complete bipartite graph $K_{m,n}$, with $l$  roots in $H_{m}$ and $k$  roots in $H_{n}$, that is,
  \begin{equation*}
   f(m,l;n,k)=|D(m,l;n,k)|.
  \end{equation*}
  Let $F$ belongs to $D(m,l;n,k).$
 From $F$, we will construct the rooted spanning forests of  $K_{m,n,p}$ with root $z_{1}\in H_{p}$ as follows.
  First, link an edge $(z,v)$  between every $z\in H_{p}\backslash \{z_{1}\}$  and
  some $v\in V(F)$ (where  $V(F)$ denotes the vertex set of graph $F$).

   There are $(m+n)^{p-1}$ ways. Notice that the obtained graph $G$ has $k+l$(weakly)
  connected components each of which has a unique vertex in $H_{m}\bigcup H_{n}$ of out-degree zero.
  Now, for any fixed integer $t$, let $G'$ denote a graph obtained
  by adding $t$ edges consecutively to $G$ as follows.

  At each step we add an edge of the form $(a,b)$  where $b$  is any vertex of  $H_{p}\backslash \{z_{1}\}$
  and $a\in H_{m}\bigcup H_{n}$  is a  vertex of out-degree zero in any component not containing $b$  in the graph
   already constructed.

   The number of components decreases by one each
   time such an edge is added.

   Since $|H_{p}\backslash \{z_{1}\}|=p-1$ and  the number  of components not containing $b$  in the
  graph $G$  already constructed is $l+k-1$, there are $(l-1)(l+k-1)$ choices for
   the first such edge. Similarly, there are $(l-1)(l+k-2)$  choices for the second edge and in general 
$(l-1)(l+k-t)$  choices for the  $t$th edge, where, $0\leq t\leq l+k-1$, because the number of components in the graph $G$
   is $l+k$. The graph $G'$ constructed like this has $l+k-t$ components each of which has a unique vertex in $H_{m}\bigcup H_{n}$ of out-degree zero
   and the remaining vertices all have out-degree; if we add edges from these vertices of out-degree zero to $z_{1}$, we obtain a tree $T'$ in $D(m,0;n,0;p,|\{z_{1}\}|)$
   that contains $G$ and in which $d^{-}(z_{1})=l+k-t$. The order in which the $t$  edges
     are added to $G$ to form $G'$ is immaterial, so it follows that there are
  \begin{equation*}
 \frac{(p-1)(l+k-1)(p-1)(l+k-2)\cdots (p-1)(l+k-t)}{t!} = {l+k-1 \choose  t}(p-1)^{t}
  \end{equation*}
   rooted spanning trees $T'$ for fixed integer $t$.

  This implies that there are
    \begin{equation*}
    \sum_{t=0}^{l+k-1}{l+k-1 \choose t}(p-1)^{t}=p^{l+k-1} \\
  \end{equation*}
   spanning trees $T$ in $D(m,0;n,0;p,|\{z_{1}\}|)$ that contain $G.$

   Hence
   \begin{eqnarray*}
 |T_{m,n,p}|&=&|D(m,0;n,0;p,|\{z_{1}\}|)|
 =\sum_{l=0}^{m}\sum_{k=0}^{n}f(m,l;n,k)(m+n)^{p-1}p^{l+k-1}\\
 &=&\sum_{l=0}^{m}\sum_{k=0}^{n}{m \choose l}{n\choose  k}n^{m-l-1}m^{n-k-1}(km+nl-kl)(m+n)^{p-1}p^{l+k-1}\\
&=&(m+n)^{p-1}(m+p)^{n-1}(n+p)^{m-1}(m+n+p).
\end{eqnarray*}
Therefore, we get the required result. 
\end{proof}

\begin{cor}
  The number $f(m,0;n,0;p,1)$ of the labeled spanning trees of  $K_{m,n,p}$ with
   a root in $H_{p}$ as follows:
  \begin{equation*}
  f(m,0;n,0;p,1)=p(m+n)^{p-1}(m+p)^{n-1}(n+p)^{m-1}(m+n+p).
  \end{equation*}
  \end{cor}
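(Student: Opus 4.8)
The plan is to reduce the corollary directly to Theorem~2.1 by accounting for the freedom in the choice of root. The quantity $f(m,0;n,0;p,1)$ counts rooted spanning trees of $K_{m,n,p}$ having no root in $H_m$ and none in $H_n$, so that the single root lies in $H_p$; crucially, this root is allowed to be \emph{any} of the $p$ vertices of $H_p$, not one prescribed vertex. By contrast, the quantity computed in Theorem~2.1, namely $|T_{m,n,p}|=|D(m,0;n,0;p,|\{z_{1}\}|)|$, counts the rooted (directed) spanning trees whose root is a single fixed vertex $z_{1}\in H_{p}$, the directed tree being obtained from an undirected one by orienting all edges toward $z_{1}$.

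First I would observe that every rooted spanning tree counted by $f(m,0;n,0;p,1)$ has a unique root vertex, and that vertex lies in $H_{p}$. Hence the collection of all such trees partitions into $p$ pairwise disjoint classes, indexed by which vertex of $H_{p}$ serves as the root. Next I would invoke the symmetry of the complete tripartite graph under relabeling the vertices of $H_{p}$: this symmetry maps the class rooted at one vertex bijectively onto the class rooted at another, so all $p$ classes have the same cardinality. By the definition of $D(m,0;n,0;p,|\{z_{1}\}|)$, that common cardinality is exactly $|T_{m,n,p}|$.

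Summing over the $p$ classes then gives
\begin{equation*}
f(m,0;n,0;p,1)=p\,|T_{m,n,p}|,
\end{equation*}
and substituting the closed form from Theorem~2.1 yields the asserted expression $p(m+n)^{p-1}(m+p)^{n-1}(n+p)^{m-1}(m+n+p)$. The only point that requires care — and it is essentially the sole substantive step — is the bookkeeping that distinguishes ``root at a fixed vertex $z_{1}$'' from ``root anywhere in $H_{p}$''. Once this factor of $p$ is justified by the disjoint-partition-plus-symmetry argument, the conclusion is immediate and no further computation is needed.
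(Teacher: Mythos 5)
Your proposal is correct and matches the paper's (implicit) reasoning: the corollary follows from Theorem~2.1 by multiplying by the $p$ choices of root in $H_{p}$, each choice contributing exactly $|T_{m,n,p}|$ trees. The paper states this as an immediate corollary without proof, and your disjoint-partition-plus-symmetry argument is precisely the justification of that factor of $p$.
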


  Let $D(m,0;n,0;p,|\{z_{i_{1}},z_{i_{2}},\cdots ,z_{i_{r}}\}|)$ be
  the set of the spanning forests of $K_{m,n,p}$ with roots $z_{i_{1}},z_{i_{2}},\cdots
  ,z_{i_{r}}$ in $H_{p}$.

  \begin{thm}
  The number $f(m,0;n,0;p,r)$ of the labeled spanning forests of the complete tripartite graph $K_{m,n,p}$ with $r$ roots in $H_{p}$ is as follows:
  \begin{equation*}
   f(m,0;n,0;p,r)={p \choose r}r(m+n)^{p-r}(m+p)^{n-1}(n+p)^{m-1}(m+n+p).
  \end{equation*}
  \end{thm}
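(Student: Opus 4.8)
The plan is to mirror the combinatorial decomposition used for Theorem 2.1, making two modifications: first select which $r$ vertices of $H_{p}$ serve as roots, and then replace the single-root completion count $p^{\,l+k-1}$ by the analogous count for $r$ roots. Concretely, I would start from a spanning forest $F\in D(m,l;n,k)$ of $K_{m,n}$ (there are $f(m,l;n,k)$ of them), whose $l+k$ roots are the out-degree-zero \emph{free} vertices lying in $H_{m}\cup H_{n}$. After choosing the root set $\{z_{i_{1}},\dots,z_{i_{r}}\}\subseteq H_{p}$ in $\binom{p}{r}$ ways, I would attach each of the remaining $p-r$ vertices of $H_{p}$ to some vertex of $V(F)=H_{m}\cup H_{n}$, which can be done in $(m+n)^{p-r}$ ways; exactly as before this leaves $l+k$ components, each retaining a unique free vertex in $H_{m}\cup H_{n}$, while distributing the $p-r$ non-root vertices of $H_{p}$ among the components.

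The heart of the argument is to count, for one such configuration, the number of ways to join the free vertices to vertices of $H_{p}$ so as to produce a spanning forest of $K_{m,n,p}$ with exactly the $r$ chosen roots (the free vertices may be joined only to $H_{p}$-vertices, by the same reasoning that made the construction of Theorem 2.1 a bijection). Contracting each of the $l+k$ components and each of the $r$ isolated roots to a super-node, this is precisely a weighted rooted-forest count: each super-node $X$ carries weight $w(X)$ equal to the number of $H_{p}$-vertices it contains, so a root-node has weight $1$ and a component-node has weight equal to the number of non-root $H_{p}$-vertices attached to it in the previous step; a free vertex may be joined only to an $H_{p}$-vertex in a \emph{different} super-node, and we must finish with the $r$ roots in $r$ distinct trees. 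I would prove that this weighted count equals $R\,W^{\,l+k-1}$, where $W=\sum_{X}w(X)$ and $R$ is the total weight of the root-nodes. Here $R=r$ and, crucially, $W=(p-r)+r=p$ no matter how the $p-r$ non-root vertices were distributed, so the completion count is $r\,p^{\,l+k-1}$ \emph{independently} of that distribution; this is what makes the two steps multiply cleanly, and the case $r=1$ recovers the factor $p^{\,l+k-1}$ of Theorem 2.1.

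To establish $R\,W^{\,l+k-1}$ I would merge all $r$ root-nodes into one virtual root $\rho^{*}$ of weight $R$ (legitimate because acyclicity only requires each component to reach some root, and the collective weight of joining to a root is $R$), reducing the forest count to a single-root tree count on $l+k+1$ super-nodes. The weighted Cayley identity $\sum_{T}\prod_{v\neq\rho^{*}}y_{\mathrm{parent}(v)}=y_{\rho^{*}}\bigl(\sum_{w}y_{w}\bigr)^{(l+k+1)-2}$, itself read off from the Pr\"ufer degree generating function $\sum_{T}\prod_{w}y_{w}^{\deg w}=\bigl(\prod_{w}y_{w}\bigr)\bigl(\sum_{w}y_{w}\bigr)^{n-2}$, then yields $R\,W^{\,l+k-1}$ at once. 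Alternatively I could reprove this in the paper's own idiom by resolving the free vertices in a fixed order and summing, at each step, the number of admissible $H_{p}$-targets; the sum telescopes to $R\,W^{\,l+k-1}$, generalizing the binomial identity $\sum_{t}\binom{l+k-1}{t}(p-1)^{t}=p^{\,l+k-1}$ used in Theorem 2.1.

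Finally I would assemble the pieces,
\[
 f(m,0;n,0;p,r)=\binom{p}{r}\,(m+n)^{p-r}\,r\sum_{l=0}^{m}\sum_{k=0}^{n}f(m,l;n,k)\,p^{\,l+k-1},
\]
and substitute the evaluation $\sum_{l,k}f(m,l;n,k)\,p^{\,l+k-1}=(m+p)^{n-1}(n+p)^{m-1}(m+n+p)$, which is exactly the computation of Theorem 2.1 divided by $(m+n)^{p-1}$. This gives the claimed formula. I expect the main obstacle to be the weighted forest count and, above all, the verification that it does not depend on how the non-root vertices of $H_{p}$ are spread among the components; once the invariance $W=p$ is observed, everything else is bookkeeping that runs parallel to Theorem 2.1.
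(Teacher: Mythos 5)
Your proposal is correct, and its skeleton coincides with the paper's proof: choose the $r$ roots in $\binom{p}{r}$ ways, attach the $p-r$ non-root vertices of $H_{p}$ to $V(F)$ in $(m+n)^{p-r}$ ways, show that the completion count for the free vertices is $r\,p^{\,l+k-1}$ depending only on $l+k$, and then reuse the evaluation $\sum_{l,k}f(m,l;n,k)\,p^{\,l+k-1}=(m+p)^{n-1}(n+p)^{m-1}(m+n+p)$ already contained in Theorem 2.1. Where you genuinely diverge is in how that completion count is established. The paper stratifies by the number $t$ of free vertices joined to non-root vertices of $Z'$: adding those $t$ edges one at a time (order immaterial) gives $\binom{l+k-1}{t}(p-r)^{t}$ configurations, joining the remaining $l+k-t$ free vertices to the $r$ roots gives $r^{\,l+k-t}$, and the binomial theorem yields $\sum_{t}\binom{l+k-1}{t}(p-r)^{t}r^{\,l+k-t}=r\,p^{\,l+k-1}$. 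You instead contract each component and each root to a super-node weighted by the number of $H_{p}$-vertices it contains, and invoke the weighted Cayley formula (roots merged into one virtual root, then Pr\"ufer) to get $R\,W^{\,l+k-1}=r\,p^{\,l+k-1}$ in one stroke. Both are sound: the paper's count is elementary and self-contained, while yours is more conceptual, makes the crucial invariance explicit ($W=(p-r)+r=p$ no matter how $Z'$ is distributed among components, which is exactly why the two stages multiply cleanly), and generalizes immediately to more parts. Your route also quietly repairs a slip in the paper's own write-up: there the factor is printed as $(p-1)^{t}$ (copied from the $r=1$ case of Theorem 2.1) instead of the correct $(p-r)^{t}$, and the displayed identity $\sum_{t}\binom{l+k-1}{t}(p-1)^{t}r^{\,l+k-t}=p^{\,l+k-1}r$ is false for $r>1$ as written; your derivation shows precisely where $p-r$ must enter.
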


\begin{proof}

  Let $z_{i_{1}},z_{i_{ in 2}},\cdots ,z_{i_{r}}$ in $H_{p}$ be vertices given as roots, $Z'=H_{p}\backslash \{z_{i_{1}},z_{i_{2}},\cdots
  ,z_{i_{r}}\}$
  and  $F$ belongs to $D(m,l;n,k)$.
  There are ${p \choose r}$ ways to choose the $r$ root in $H_{p}$.
  As in theorem 2.1, link an edge $(z,v)$ between every $z\in Z'$
  and  some $v\in V(F)$.
  There are $(m+n)^{p-r}$ ways.
  
Notice that the obtained graph $G$ has $k+l$ (weakly)connected components each of which has a unique vertex in $H_{m}\bigcup H_{n}$ of out-degree zero.
  As in the proof of theorem 2.1, for any fixed integer $t$ such that $0\leq t\leq l+k-1$,
  link an edge $(v,z)$ between any $z\in Z'$ and  a vertex $v\in H_{m}\bigcup H_{n}$ of out-degree zero in any component not containing $z$ in the
  graph already constructed, we repeat this procedure $t$ times.
  There are
  \begin{equation*}
 \frac{(p-1)(l+k-1)(p-1)(l+k-2)\cdots (p-1)(l+k-t)}{t!} = {l+k-1 \choose  t}(p-1)^{t}
  \end{equation*}
   rooted spanning forests $F'$.
   The every forests $F'$ thus obtained has $l+k-t$ (weakly)connected components each of which has a unique vertex in $H_{m}\bigcup H_{n}$ of out-degree
   zero.
   The number of the ways linking edges from $l+k-t$ vertices of
   out-degree zero in these components to $r$ vertices $z_{i_{1}},z_{i_{2}},\cdots
   ,z_{i_{r}}$ in $H_{p}\backslash Z'$ is equals to $r^{l+k-t}$.

  The number of the spanning forests with $r$ roots $z_{i_{1}},z_{i_{2}},\cdots
   ,z_{i_{r}}$ in $H_{p}$ of $K_{m,n,p}$ obtained from $F$ is as
   follows:
    \begin{equation*}
    \sum_{t=0}^{l+k-1}{l+k-1 \choose t}(p-1)^{t}r^{l+k-t}=p^{l+k-1}r. \\
  \end{equation*}
Hence
   \begin{eqnarray*}
 &&|D(m,0;n,0;p,|\{z_{i_{1}},z_{i_{2}},\cdots ,z_{i_{r}}\}|)|
 =\sum_{l=0}^{m}\sum_{k=0}^{n}f(m,l;n,k)(m+n)^{p-r}p^{l+k-1}r\\
 && \qquad = \sum_{l=0}^{m}\sum_{k=0}^{n}{m \choose l}{n\choose  k}n^{m-l-1}m^{n-k-1}(km+nl-kl)(m+n)^{p-r}p^{l+k-1}r\\
 && \qquad = (m+n)^{1-r}r|D(m,0;n,0;p,|\{z_{1}\}|)|\\
 && \qquad = r(m+n)^{p-r}(m+p)^{n-1}(n+p)^{m-1}(m+n+p).
 \end{eqnarray*}
Therefore,
  \begin{eqnarray*}
 f(m,0;n,0;p,r)
 &=&{p \choose r}|D(m,0;n,0;p,|\{z_{i_{1}},z_{i_{2}},\cdots
 ,z_{i_{r}}\}|)|\\
 &=&{p \choose r}r(m+n)^{p-r}(m+p)^{n-1}(n+p)^{m-1}(m+n+p).
 \end{eqnarray*}
\end{proof}

%
%

\section{Counting the total number of all spanning forests  of $K_{m,n,p}$}

\begin{thm}
  \label{T31}
  The total number $S(m,n,p)$ of all spanning forests  of $K_{m,n,p}$ is as follows:
 \begin{equation*}
S(m,n,p)=(m+n+1)^{p-1}(m+p+1)^{n-1}(n+p+1)^{m-1}(m+n+p+1)^{2}.
 \end{equation*}
 \end{thm}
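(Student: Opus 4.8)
The plan is to reduce the (necessarily rooted) forest count to a tree count on a larger graph and then reuse the decomposition of Theorem~2.1. First observe that $S(m,n,p)$ counts \emph{rooted} spanning forests of $K_{m,n,p}$: each tree component carries a distinguished root, and an unrooted forest whose components have sizes $s_1,\dots,s_c$ is counted $s_1\cdots s_c$ times; this is consistent with the fact that the basic quantity $f(m,l;n,k)$ is itself a count of rooted forests. (As a sanity check, the formula gives $S(1,1,1)=16$, matching the $16$ rooted spanning forests of the triangle $K_3$.) The key idea is the \emph{cone} construction: adjoin one new vertex $w$ adjacent to every vertex of $K_{m,n,p}$. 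The map sending a rooted spanning forest $F$ to the graph obtained by joining $w$ to the root of each component of $F$ is a bijection onto the spanning trees of the cone, with inverse ``delete $w$ and declare its former neighbours to be the roots.'' Since $w$ is adjacent to all of $H_m\cup H_n\cup H_p$ while the three original parts keep exactly their mutual adjacencies, the cone is precisely the complete $4$-partite graph $K_{m,n,p,1}$, with $w$ a singleton part. Hence
\[
S(m,n,p)=|T_{m,n,p,1}|.
\]

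It then remains to compute $|T_{m,n,p,1}|$ by the same combinatorial decomposition as in Theorem~2.1, now carried out one level higher with the trees rooted at $w$. Starting from a spanning forest of $K_{m,n}$ with $l$ roots in $H_m$ and $k$ roots in $H_n$ (there are $f(m,l;n,k)$ of these), I would attach the vertices of $H_p$ to this backbone, merge components by adding tripartite edges exactly as before, and finally direct the remaining out-degree-zero vertices to $w$; summing the resulting binomial identity over the number of merging edges and then over $l,k$ yields the closed form. Equivalently, this reproduces the general complete-multipartite spanning-tree count $N^{\,k-2}\prod_{i}(N-n_i)^{n_i-1}$, of which Theorem~2.1 is the case $k=3$. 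Substituting the parts $(m,n,p,1)$ with $N=m+n+p+1$ and $k=4$, the factor $N^{\,k-2}$ becomes $(m+n+p+1)^2$, the three large parts contribute $(n+p+1)^{m-1}(m+p+1)^{n-1}(m+n+1)^{p-1}$, and the singleton part contributes $(N-1)^{0}=1$; together these give exactly the asserted value of $S(m,n,p)$.

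The main obstacle is the bookkeeping in this four-part decomposition. In Theorem~2.1 there was a single distinguished part whose vertices were attached and then gathered at one root, whereas here one must handle the vertices of $H_p$ and the apex $w$ together, track how the number of weakly connected components decreases as merging edges are added, and check that the singleton part's contribution collapses to $1$; getting this sum right—an identity generalizing $\sum_{t}\binom{l+k-1}{t}(p-1)^{t}=p^{l+k-1}$—is the only substantive computation. The cone bijection is what makes clear in advance that the answer must be the complete $4$-partite tree count, which is why I prefer it to the alternative of avoiding the cone and instead summing $f(m,i;n,j;p,r)$ over all root distributions $(i,j,r)$: that route would first require a fully symmetric generalization of Theorem~2.3 and then the evaluation of a triple sum, which is considerably more laborious.
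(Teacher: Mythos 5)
Your strategy is correct, and its framing is genuinely different from the paper's. You first pin down (correctly) that $S(m,n,p)$ counts \emph{rooted} spanning forests, then use the cone bijection $S(m,n,p)=|T_{m,n,p,1}|$ and the complete-multipartite spanning-tree formula $N^{k-2}\prod_i (N-n_i)^{n_i-1}$ with parts $(m,n,p,1)$ and $N=m+n+p+1$, which indeed yields the stated expression. The paper never mentions a cone: it re-runs the decomposition of Theorem 2.1 with one extra parameter, the number $r$ of roots placed in $H_p$. Concretely, it takes $F\in D(m,l;n,k)$, chooses the $r$ roots (${p \choose r}$ ways), attaches the $p-r$ non-root vertices of $H_p$ to $V(F)$ ($(m+n)^{p-r}$ ways), adds $t$ merging edges (${l+k-1 \choose t}(p-r)^{t}$ ways), and then lets each of the remaining $l+k-t$ out-degree-zero vertices of $H_m\bigcup H_n$ either remain a root or point to one of the $r$ chosen roots ($(r+1)^{l+k-t}$ ways); the identity $\sum_{t=0}^{l+k-1}{l+k-1 \choose t}(p-r)^{t}(r+1)^{l+k-t}=(r+1)(p+1)^{l+k-1}$ followed by a triple sum over $l,k,r$ gives the formula. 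Note that the paper's argument is exactly your four-part decomposition in disguise: ``remains a root'' corresponds to ``points to the apex $w$'', and the $r$ roots in $H_p$ are precisely the $H_p$-vertices adjacent to $w$ in the cone; so if you carry out the bookkeeping you flagged as the main obstacle, you will land on the paper's triple sum term by term. What your framing buys is conceptual: the cone bijection explains a priori why the answer must be the $K_{m,n,p,1}$ tree count, and if you are permitted to quote the multipartite formula (it is available in the paper's own references, Lewis and Szekely), your proof is complete with essentially no computation. The caveat is that, absent such a citation, the substantive identity still has to be verified, and your proposal leaves that step as a sketch rather than working it out.
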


\begin{proof}

  Let  $B(p,r)$ denote  the set of  spanning forests of the complete tripartite graph $K_{m,n,p}$ in which
    $r$  roots are in $H_{p}$ and remain roots are in $H_{m}$  or  $H_{n}$. Let $F$ belongs to $D(n,l;n,k)$.
  From $F$, we will construct the rooted spanning forests of  $K_{m,n,p}$  with $r$ roots  in $H_{p}$ as follows.
  Let $z_{i_{1}},z_{i_{2}},\cdots ,z_{i_{r}}\in H_{p}$   be  vertices given as roots. The number of
   ways which select  $r$  roots in $H_{n}$  is equal to  ${p \choose r}$.
  Let $Z'=H_{p}\backslash \{z_{i_{1}},z_{i_{2}},\cdots ,z_{i_{r}}\}$.
 
 First, link an edge $(z,v)$  between every $z\in Z'$  and
   some $v\in V(F)$ (i.e., vertex $v$  of  forest $F$).
 There are $(m+p)^{p-r}$ ways. Notice that the obtained graph $G$ has $k+l+r$
   (including components consisting of $z_{i_{1}},z_{i_{2}},\cdots ,z_{i_{r}}\in H_{p}$) weakly
  connected components.

  Let $t$ denote any fixed integer such that $0\leq t \leq l+k-1$,
  $H$ denote a graph obtained by adding $t$ edges consecutively  to $G$ as follows.
  At each step we add an edge of the form $(a, b)$ where $b$ is any
  vertex of $Z'$ and $a\in H_{m}\bigcup H_{n}$ is a root of any
  component not containing $b$ in the graph already constructed. The
  number of components decreases by one each time such an edge is added.
  Since $|Z'|=p-r$ and the number of components not containing $b$
  in the graph $G$ already constructed is $l+k-1$, there are
  $(p-r)(l+k-1)$ choices for the first such edge, $(p-r)(l+k-2)$ choices for the second edge,$\cdots $
  , and $(p-r)(l+k-t)$ choices for the $t$th edge.
  The order in which the $t$ edges are added to $G$ to form $H$ is
  immaterial, so it follows that there are
   \begin{equation*}
  \frac{(p-r)(l+r-1)(p-r)(l+r-2)\cdots (p-r)(l+r-t)}{ t!}={l+r-1 \choose t}(p-r)^{t}
  \end{equation*}
  ways.

  The graph $H$ constructed like this has $l+k-t$  components(with the
   exception of components consisting of $H_{p}\backslash Z'$) each of which has a unique vertex in $H_{m}\bigcup H_{n}$  of
    out-degree zero and the remaining vertices all have out-degree one; if we
    add edges from some vertices of these vertices of out-degree zero to
    $z_{i_{1}},z_{i_{2}},\cdots ,z_{i_{k}}$, we obtain a forest in $B(p,r)$  that contains $G$. There are
    $(r+1)^{l+k-t}$ ways. Therefore, this implies that there are
    \begin{equation*}
    \sum_{t=0}^{l+k-1}{l+k-1 \choose t}(p-r)^{t}(r+1)^{l+k-t}=(r+1)(p+1)^{l+k-1} \\
    \end{equation*}
   forests   in  $B(n,r)$   that contain $G$.

   Hence, 
  \begin{eqnarray*}
    S(m,n,p)
    &=&\sum_{l=0}^{m}\sum_{k=0}^{n}\sum_{r=0}^{p}f(m,l;n,k)(m+n)^{p-r}(r+1)(p+1)^{l+k-1} \\
    &=&\sum_{l=0}^{m}\sum_{k=0}^{n}\sum_{r=0}^{p}{m \choose l}{n \choose k}{p \choose r}n^{m-l-1}m^{n-k-1}\\
    &&(km+ln-lk)(m+n)^{p-r}(r+1)(p+1)^{l+k-1}\\
    &=&(m+n+1)^{p-1}(m+p+1)^{n-1}(n+p+1)^{m-1}(m+n+p+1)^{2}.
  \end{eqnarray*}
Therefore, we get the required result.
\end{proof} 

%

 \textbf{Acknowledgement} I would like to thank the editors and anonymous reviewers for their help and advices for this article.

 \end{document}